\newcommand{\pl}[1]{\foreignlanguage{polish}{#1}}
\newcommand{\R}{\mathbb{R}}
\newcommand{\E}{\mathbb{E}}
\renewcommand{\P}{\mathbb{P}}
\newcommand{\ind}[1]{\mathbbm{1}_{#1}}
\newtheorem{theorem}{Theorem}[section]
\newtheorem{lemma}[theorem]{Lemma}
\theoremstyle{remark}
\newtheorem*{remark}{Remark}
\theoremstyle{definition}
\definecolor{green}{rgb}{0,0.75,0}
\numberwithin{equation}{section}
\title[Dim-free estimates for Riesz transforms related to Schr\"odinger operators]{Dimension-free estimates for positivity-preserving Riesz transforms related to Schr\"odinger operators with certain potentials}
\author{Maciej Kucharski}
\address{Maciej Kucharski\\
	Instytut Matematyczny\\
	Uniwersytet \pl{Wroc{\l}awski}\\
	Plac Grun\-waldzki 2\\
	50-384 \pl{Wroc{\l}aw}\\
	Poland}
\email{mkuchar@math.uni.wroc.pl}
\subjclass[2020]{47D08, 42B20, 42B37}
\keywords{Riesz transform, Schr\"odinger operator, $L^\infty$ boundedness, dimension-free estimates}
\thanks{The author was supported by the National Science Centre (NCN), Poland, research project Preludium Bis 2019/35/O/ST1/00083.}
\begin{document}
	
\begin{abstract}
	We study the $L^\infty(\R^d)$ boundedness for Riesz transforms of the form ${V^{a}(-\frac12\Delta+V)^{-a}},$ where $a > 0$ and $V$ is a non-negative potential with power growth acting independently on each coordinate. We factorize the semigroup $e^{-tL}$ into one-dimensional factors, estimate them separately and combine the results to estimate the original semigroup. Similar results with additional assumption $a \leqslant 1$ are obtained on $L^1(\R^d)$.
\end{abstract}
	
\maketitle
\section{Introduction}
\label{sec:int}

In this paper we consider the Schr\"{o}dinger operator $L$ on $\R^d$ given by
\[
	L = -\frac{1}{2}\Delta + V,
\]
with $V$ being a non-negative potential, and the associated Riesz transform
\begin{equation} \label{eq:RV}
	R_V^a f(x) = V(x)^a L^{-a} f(x) = \frac{V(x)^a}{\Gamma(a)} \int_0^\infty e^{-tL}f(x) \, t^{a-1} \, dt, \quad a > 0.
\end{equation}

Such Riesz transforms related to Schr\"{o}dinger operators have been studied by numerous authors, see \cite{AssaadOuhabaz,AuscherBenAli,Dziubanski,DziubanskiGlowacki,Sikora,UrbanZienkiewicz}. For general $V \in L^2_{\textrm{loc}}$ Sikora proved in \cite[Theorem 1]{Sikora} that $R_V^{1/2}$ is bounded on $L^p$ for $1 < p \leqslant 2$ (in fact the result applies not only to Riesz transforms on $\R^d$ but also on more general doubling spaces), it is also well known that $R_V^1$ is bounded on $L^1$ with norm estimated by 1, see for example \cite{GallouetMorel}, \cite[Lemma 6]{KatoLp} and \cite[Theorem 4.3]{AuscherBenAli}. When the potential $V$ belongs to the reverse H\"older class $B_q$ for some $q \geqslant \frac{d}{2}$, then it is known, see \cite[Theorem 5.10]{Shen}, that $R_V^1$ is bounded on $L^1$. There are also two results regarding polynomial potentials, namely Dziubański \cite[Theorem 4.5]{Dziubanski} proved that $R_V^a$, $a > 0$, is bounded on $L^1$ and $L^\infty$ if $V$ is a polynomial and then Urban and Zienkiewicz proved in \cite[Theorem 1.1]{UrbanZienkiewicz} that $R_V^1$ is bounded on $L^\infty$ independently of the dimension for $V$ being a polynomial satisfying a certain condition of C. Fefferman. Recently it has been proved in \cite{KucharskiWrobelSchrodinger} that $R_V^a$ is bounded on $L^p$ with $0 \leqslant a \leqslant 1/p$ and $1 < p \leqslant 2$ for general $V \in L^1_{\textrm{loc}}$ and that $R_V^a$, $a > 0$, is bounded on $L^1$ and $L^\infty$ if the potential $V$ has polynomial or exponential growth.

Obtaining dimension-free bounds for the Riesz transforms related to Schr\"{o}dinger operators seems to be a significantly harder task. The only available results are the aforementioned paper by Urban and Zienkiewicz \cite{UrbanZienkiewicz}, the well-known bound for $R_V^1$ for general potentials and a result regarding a particular case of $R_V^{1/2}$ with $V(x) = \abs{x}^2$, see \cite{HarboureRosaSegoviaTorrea,LustPiquard,Kucharski}. Our goal is to extend these dimension-free results and get $L^\infty$ bounds for $R_V^a$ with $a > 0$ and $L^1$ bounds for $R_V^a$ with $a \leqslant 1$ when the potential $V$ is of the form
\begin{equation} \label{eq:V}
	V(x) = V_1(x) + \dots + V_d(x),
\end{equation}
where each $V_i$ acts only on the $i$-th coordinate of the argument $x$ and has polynomial growth with the exponent not greater than 2, i.e. there are absolute constants $m$ and $M$ such that
\begin{equation} \label{eq:VmM}
	m \abs{x_i}^\alpha \leqslant V_i(x) \leqslant M \abs{x_i}^\alpha
\end{equation}
for some $0 < \alpha \leqslant 2$. This holds for example if $V_i(x) = x_i^2$ and $V(x) = \abs{x}^2$, which results in the operator $L = -\frac{1}{2} \Delta + \abs{x}^2$ called the harmonic oscillator. The reason why we can only handle $\alpha \leqslant 2$ is related to the distribution of the Brownian motion, which arises in the Feynman--Kac formula \eqref{eq:feynman-kac3}, and is visible in \eqref{eq:alpha2}.

By the definition \eqref{eq:RV} of $R_V^a$ and the positivity-preserving property of the semigroup $e^{-tL}$ obtaining the $L^\infty$ bounds for $R_V^a$ amounts to estimating the value of $R_V^a(\ind{})(x)$ independently of $x$ and $d$, which in turn hints that the main part of the proof is estimating the semigroup applied to the constant function 1, i.e. $e^{-tL}(\ind{})$. The particular structure of $V$ \eqref{eq:V} lets us write
\begin{equation} \label{eq:Li}
	L = \sum_{i=1}^d L_i, \quad \text{where } L_i = -\frac{1}{2}\frac{\partial^2}{\partial x_i^2} + V_i,
\end{equation}
and, as a consequence, factorize the semigroup $e^{-tL}$ in the following way
\begin{equation} \label{eq:fact0}
	e^{-tL} = \prod_{i=1}^d e^{-tL_i} \quad \text{and hence} \quad e^{-tL}(\ind{})(x) = \prod_{i=1}^d e^{-tL_i}(\ind{})(x).
\end{equation}
This is the key property allowing us to get estimates that does not depend on the dimension $d$.

The main result of the paper is the following theorem.
\begin{theorem} \thlabel{thm1}
	Fix $0 < \alpha \leqslant 2$ and let $V$ given by \eqref{eq:V} satisfy \eqref{eq:VmM}. For $a > 0$ let the Riesz transform $R_V^a$ be defined as in \eqref{eq:RV}. Then there is a constant $C > 0$ depending on $m$, $M$, and $\alpha$ and independent of the dimension $d$ such that
	\[
		\norm{R_V^a f}_{L^\infty(\R^d)} \leqslant C\norm{f}_{L^\infty(\R^d)}, \quad f \in L^\infty(\R^d).
	\]
\end{theorem}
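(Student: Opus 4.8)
The plan is to carry out the strategy announced in the introduction. By the positivity-preserving property, $|e^{-tL}f(x)|\le\|f\|_{L^\infty}e^{-tL}(\ind{})(x)$, so in view of \eqref{eq:RV} it suffices to bound $R_V^a(\ind{})(x)=\tfrac{V(x)^a}{\Gamma(a)}\int_0^\infty e^{-tL}(\ind{})(x)\,t^{a-1}\,dt$ by a constant independent of $x$ and $d$; we may assume $V(x)>0$, otherwise $x=0$ and the left side vanishes. Using the factorization \eqref{eq:fact0}, the Feynman--Kac formula, and $V_i\ge m|\cdot|^\alpha$ from \eqref{eq:VmM}, one gets $e^{-tL}(\ind{})(x)\le\prod_{i=1}^d h(t,x_i)$, where $h(t,y)=\E^{y}\expp{-m\int_0^t|\omega_s|^\alpha\,ds}$ and $\omega$ is a standard one-dimensional Brownian motion. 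Thus everything reduces to good pointwise bounds for $h$, after which one only has to integrate.

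The technical core is the following small-time estimate, which must be \emph{clean}, i.e.\ carry no multiplicative constant: there are $t_1,c_2>0$ depending only on $m,\alpha$ with
\[
 h(t,y)\le\expp{-c_2\,t\,|y|^\alpha}\qquad\text{for all }0<t\le t_1,\ y\in\R.
\]
I would prove this by splitting into two regimes according to the size of $t|y|^\alpha$. If $t|y|^\alpha\le B$ for a suitable small $B=B(m,\alpha)$, write $A=\int_0^t|\omega_s|^\alpha\,ds$ and use $e^{-x}\le 1-x+\tfrac{x^2}{2}$ to get $h(t,y)\le 1-m\,\E^{y}A+\tfrac{m^2}{2}\E^{y}A^2$; since $\E^{y}|\omega_s|^\alpha$ is of order $\max(|y|,\sqrt s)^\alpha$ one has $\E^{y}A\gtrsim t\max(|y|^\alpha,t^{\alpha/2})$, while a Cauchy--Schwarz bound on $\E^{y}A^2$ makes the quadratic term negligible in this regime, whence $-\ln h(t,y)\gtrsim t\max(|y|^\alpha,t^{\alpha/2})\ge c_2 t|y|^\alpha$. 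If instead $t|y|^\alpha>B$ (which, since $t\le t_1$, forces $|y|$ to be large), split the time integral at the first time $\tau$ that $|\omega_s|$ reaches $|y|/2$: on $\{\tau>t\}$ one has $|\omega_s|\ge|y|/2$ throughout $[0,t]$, and on $\{\tau\le t\}$ one uses the hitting-time Laplace transform $\E^{y}e^{-\mu\tau}=e^{-(|y|/2)\sqrt{2\mu}}$ (with $\mu=m(|y|/2)^\alpha$) together with $h(t-\tau,|y|/2)\le1$; this yields $h(t,y)\le e^{-m2^{-\alpha}t|y|^\alpha}+e^{-\sqrt{2m}\,(|y|/2)^{1+\alpha/2}}$, and \emph{this is exactly where the hypothesis $\alpha\le2$ enters}: since then $1+\tfrac\alpha2\ge\alpha$, for $|y|$ beyond a threshold depending only on $m,\alpha$ the second term is dominated by $e^{-c_2 t|y|^\alpha}$ (using $t\le t_1$), and since $t|y|^\alpha>B$ the exponential gain $e^{-m2^{-\alpha}t|y|^\alpha}$ leaves enough room to lower the constant and absorb the remaining $O(1)$ losses, giving the clean bound.

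Next I would establish a companion large-time bound: $h(t,y)\le\kappa^{-1/2}\,e^{-\la t/2}\,e^{-c_3|y|^\alpha}$ for $t\ge t_1$, with $\kappa\in(0,1)$, $\la>0$, $c_3>0$ depending only on $m,\alpha$. This follows by chopping $[0,t]$ into blocks of length $t_1$ and using the Markov property and submultiplicativity to get $\sup_z h(t,z)\le\kappa^{-1}e^{-\la t}$, where $\kappa:=\sup_z h(t_1,z)<1$ (finite and strictly $<1$ by continuity, decay at infinity, and the strict pointwise inequality $h(t_1,z)<1$) and $\la:=-(\ln\kappa)/t_1$; the factor $e^{-c_3|y|^\alpha}$ is then produced by taking the geometric mean with the small-time bound evaluated at $t_1$, which survives because $h(t,y)$ is nonincreasing in $t$.

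Finally I would assemble the estimate. For $0<t\le t_1$, since $|x_i|^\alpha\ge V_i(x_i)/M$, we have $\prod_{i=1}^d h(t,x_i)\le\expp{-\tfrac{c_2}{M}\,t\,V(x)}$, hence
\[
 \frac{V(x)^a}{\Gamma(a)}\int_0^{t_1}\prod_{i=1}^d h(t,x_i)\,t^{a-1}\,dt\le\frac{V(x)^a}{\Gamma(a)}\int_0^\infty\expp{-\tfrac{c_2}{M}tV(x)}t^{a-1}\,dt=\Big(\frac{M}{c_2}\Big)^a.
\]
For $t\ge t_1$, $\prod_{i=1}^d h(t,x_i)\le\big(\kappa^{-1/2}e^{-\la t/2}\big)^d\,\expp{-\tfrac{c_3}{M}V(x)}$, and the key point is that the choice $\la=-(\ln\kappa)/t_1$ makes $\kappa^{-1/2}e^{-\la t/2}\le1$ \emph{precisely} for $t\ge t_1$, so that $\big(\kappa^{-1/2}e^{-\la t/2}\big)^d\le\kappa^{-1/2}e^{-\la t/2}$ and the dimension drops out; therefore
\[
 \frac{V(x)^a}{\Gamma(a)}\int_{t_1}^\infty\prod_{i=1}^d h(t,x_i)\,t^{a-1}\,dt\le\frac{\kappa^{-1/2}}{\Gamma(a)}\,V(x)^a\expp{-\tfrac{c_3}{M}V(x)}\int_0^\infty e^{-\la t/2}t^{a-1}\,dt,
\]
which is bounded because $s\mapsto s^a e^{-c_3 s/M}$ is. Adding the two contributions gives $R_V^a(\ind{})(x)\le C(a,m,M,\alpha)$ uniformly in $x$ and $d$, and the theorem follows. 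The main obstacle is the \emph{clean} small-time estimate: a bound of the form $h(t,y)\le A\,e^{-c_2 t|y|^\alpha}$ with $A>1$ would produce a fatal factor $A^d$ after multiplying over the $d$ coordinates, which is why the delicate hitting-time analysis in the large regime — and with it the restriction $\alpha\le2$ — cannot be avoided.
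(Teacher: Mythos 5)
Your proposal is correct, and while it follows the same overall skeleton as the paper (reduce to $R_V^a(\ind{})$ by positivity, factorize the semigroup coordinatewise, prove a \emph{clean} constant-one small-time bound $e^{-ctV_i(x)}$, add a large-time exponential decay, and integrate), the two key ingredients are executed genuinely differently. For the one-dimensional lemma, you first replace $V_i$ by its lower bound $m\abs{y}^\alpha$, reducing everything to the single model function $h(t,y)$, and you split cases by the size of $t\abs{y}^\alpha$ rather than by $\abs{x_i}$; this collapses the paper's three cases into two and entirely avoids its middle case (the truncation $V_i^n$, the derivative identity $\frac{d}{dt}e^{-tL_i^n}(\ind{})=-e^{-tL_i^n}(V_i^n)$, and the subsequent limit $n\to\infty$), with the hitting-time Laplace transform playing the role of the paper's reflection-principle estimate \eqref{eq:prob}; the hypothesis $\alpha\leqslant 2$ enters at the same structural point in both arguments. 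For large times, the paper imports \cite[Lemma 4.1]{KucharskiWrobelSchrodinger} to get $e^{-tL}(\ind{})\leqslant e^{-d\delta t}$ for $t\geqslant N$ and then must convert $V(x)^a$ into $\sum_i V_i(x)^a$ (separate cases $a<1$ and $a\geqslant 1$, with the factor $d^{a-1}$ cancelled against $(\delta d)^{-a}$), whereas you prove the decay self-containedly by submultiplicativity of $\sup_z h(t,z)$ and, via the geometric-mean/monotonicity trick, retain the spatial factor $e^{-c_3\abs{y}^\alpha}$, so that $V(x)^a e^{-\frac{c_3}{M}V(x)}$ is bounded directly and no case split in $a$ is needed; your observation that $\kappa^{-1/2}e^{-\lambda t/2}\leqslant 1$ exactly for $t\geqslant t_1$ is what removes the dimension there, and it is sound. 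The trade-off: your route is more self-contained and arguably simpler in the case analysis, while the paper's middle-case machinery works directly with $V_i$ (using both bounds in \eqref{eq:VmM}) and its citation of the earlier lemma shortens the large-time discussion; both correctly deliver constants depending only on $a$, $m$, $M$, $\alpha$ and not on $d$.
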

As a by-product of our considerations we also obtain $L^1$ estimates for $R_V^a$, but only for a limited range of $a$. The reason for this is that we need to use concavity of the function $x^a$.
\begin{theorem} \thlabel{thm2}
	Fix $0 < \alpha \leqslant 2$ and let $V$ given by \eqref{eq:V} satisfy \eqref{eq:VmM}. For $0 < a \leqslant 1$ let the Riesz transform $R_V^a$ be defined as in \eqref{eq:RV}. Then there is a constant $C > 0$ depending on $m$, $M$, and $\alpha$ and independent of the dimension $d$ such that
	\[
		\norm{R_V^a f}_{L^1(\R^d)} \leqslant C\norm{f}_{L^1(\R^d)}, \quad f \in L^1(\R^d).
	\]
\end{theorem}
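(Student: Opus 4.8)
The plan is to reduce the $L^1$ bound to a dimension-free pointwise estimate, exactly as the $L^\infty$ bound reduces to controlling $R_V^a(\ind{})$ (cf.\ the discussion after \eqref{eq:RV}). Since $L$, hence $L^{-a}$, is self-adjoint and multiplication by $V^a$ is self-adjoint, the adjoint operator is $(R_V^a)^{\ast}g = L^{-a}(V^a g)$, whose integral kernel $\frac{1}{\Gamma(a)}\int_0^\infty e^{-tL}(x,y)\,V(y)^a\,t^{a-1}\,dt$ is non-negative; therefore
\[
	\norm{R_V^a}_{L^1(\R^d)\to L^1(\R^d)} = \norm{(R_V^a)^{\ast}}_{L^\infty\to L^\infty} = \sup_{x\in\R^d}\frac{1}{\Gamma(a)}\int_0^\infty e^{-tL}(V^a)(x)\,t^{a-1}\,dt ,
\]
and it suffices to bound the last supremum independently of $d$.

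Here concavity of $x\mapsto x^a$ for $0<a\le1$ enters. Since $e^{-tL}$ is positivity-preserving with $e^{-tL}(\ind{})\le\ind{}$, normalizing $e^{-tL}(x,\cdot)$ to a probability measure and applying Jensen's inequality gives
\[
	e^{-tL}(V^a)(x)\le\big(e^{-tL}(\ind{})(x)\big)^{1-a}\big(e^{-tL}(V)(x)\big)^{a}\le\big(e^{-tL}(V)(x)\big)^{a}.
\]
Because $\Delta\ind{}=0$ we have $L\ind{}=V$, so $e^{-tL}(V)(x)=-\partial_t e^{-tL}(\ind{})(x)$. Writing $\rho_t(x):=e^{-tL}(\ind{})(x)$, which by \eqref{eq:fact0} equals $\prod_{i=1}^d e^{-tL_i}(\ind{})(x_i)$ and decreases in $t$ from $1$ to $0$, the chain rule yields
\[
	e^{-tL}(V)(x)=-\rho_t'(x)=\rho_t(x)\sum_{i=1}^d\frac{e^{-tL_i}(V_i)(x_i)}{e^{-tL_i}(\ind{})(x_i)} ,
\]
and therefore $e^{-tL}(V^a)(x)\le\rho_t(x)^a\big(\sum_{i=1}^d e^{-tL_i}(V_i)(x_i)/e^{-tL_i}(\ind{})(x_i)\big)^{a}$. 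The essential point is that the sum over the $d$ coordinates sits \emph{inside} the power $a$ (together with the full product $\rho_t(x)$): this is precisely what concavity of $x^a$ buys, and it is why distributing $V^a\le\sum_i V_i^a$ at the outset — which would pull the sum \emph{outside} the power — is wasteful.

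It then remains to feed in the one-dimensional estimates for $e^{-tL_i}(\ind{})$ and $e^{-tL_i}(V_i)$ established while proving \thref{thm1}; this is the sense in which \thref{thm2} comes as a by-product. At the time scale where $\rho_t(x)$ transitions from $1$ to $0$, the factor $\rho_t(x)^a$ contributes smallness of order $\exp(-c\,d\,\kappa(t))$, while the bracketed sum is of order $(c\,d\,\theta(t))^a=d^{a}(c\,\theta(t))^a$ by those one-dimensional bounds; since the sum appears to the power $a<1$, the factors $d^{a}$ and $\exp(-c\,a\,d\,\kappa(t))$ combine, and after the substitution that rescales $t$ by the appropriate power of $d$ (as in the proof of \thref{thm1}, reflecting the $d$-dependent transition scale of $\rho_t(x)$), $\int_0^\infty e^{-tL}(V^a)(x)\,t^{a-1}\,dt$ collapses to a convergent integral bounded uniformly in $x$ and $d$. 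Had we used $V^a\le\sum_i V_i^a$ instead, the same computation would leave a leftover factor $d^{1-a}$, which is harmless only at $a=1$.

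The main obstacle is exactly this dimension-free bookkeeping: one must verify that the exponential-in-$d$ smallness of $\rho_t(x)$ offsets the $d$-fold sum (to the power $a$) uniformly over all $x$, which demands one-dimensional inputs that are sharp both for small $t$, where $e^{-tL_i}(\ind{})(y)$ behaves like $1-c\,t^{1+\alpha/2}$ near $y=0$, and for large $t$, where it decays exponentially at the spectral-gap rate; the assumption $\alpha\le2$ re-enters here through the Brownian motion in the Feynman--Kac formula \eqref{eq:feynman-kac3}. The hypothesis $a\le1$ is indispensable to this route, since without concavity of $x\mapsto x^a$ the Jensen reduction above — which is what lets the $d$-dimensional estimate be deduced from the one-dimensional ones already available — is unavailable.
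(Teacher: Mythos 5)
Your reduction is the same as the paper's: duality to $\norm{L^{-a}(V^a)}_\infty$, then Jensen/H\"older with $0<a\leqslant 1$ to get $e^{-tL}(V^a)\leqslant \big(e^{-tL}(V)\big)^a$, and the product-rule identity $e^{-tL}(V)=\sum_{i}e^{-tL^i}(\ind{})\,e^{-tL_i}(V_i)$, which is exactly \eqref{eq:factL1} written through the logarithmic derivative of $\rho_t$. You also correctly diagnose why applying $V^a\leqslant\sum_i V_i^a$ at small times would leave a factor $d^{1-a}$. The problem is that the proof stops where the real work begins: the claim that the bracketed sum is ``of order $(c\,d\,\theta(t))^a$'' and that a single substitution rescaling $t$ by a power of $d$ makes $\int_0^\infty e^{-tL}(V^a)(x)\,t^{a-1}\,dt$ collapse to a bounded integral is asserted, not proved, and as stated it is not uniform in $x$. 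After inserting the one-dimensional bound $e^{-tL_i}(V_i)(x)\lesssim t^{\alpha/2}+V_i(x)\,e^{-tL_i}(\ind{})(x)$ (the paper's \eqref{eq:etLV}) you get two competing contributions: a term $V(x)^a e^{-tL}(\ind{})(x)^a$, whose smallness comes from $e^{-c_N tV(x)}$ and has nothing to do with $d$ when $x$ is close to the coordinate axes, and a term $t^{a\alpha/2}\big(\sum_i e^{-tL^i}(\ind{})(x)\big)^a$, whose decay in $d$ comes from $e^{-c_N t^{\alpha/2+1}}$ only for those coordinates with $\abs{x_j}\leqslant 4$ (this is \eqref{eq:lemma2}; for large coordinates one has instead $e^{-c_N t V_j(x)}$ from \eqref{eq:lemma1}). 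Hence there is no single transition scale $\kappa(t)$, and no single change of variables in $t$, valid for all $x$: when $k$ coordinates are large and $d-k$ are small the available exponential gains are of the two different types $e^{-c\,t(k-1)}$ and $e^{-c\,(d-k-1)t^{\alpha/2+1}}$, and one must show that their combination still produces the factor $d^{-a}$ needed to cancel $d^a$. The paper does this by a case analysis in $k$ and an interpolation via H\"older's inequality with exponents $p=\frac{d-2}{k-1}$, $q=\frac{d-2}{d-k-1}$, together with the explicit integral \eqref{eq:aux}; some argument of this kind (or a substitute for it) is indispensable, and it is the core of the proof of \thref{thm2'}, not bookkeeping.

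Two smaller omissions: the range $t\geqslant N$ is not treated (there the paper does use subadditivity $V^a\leqslant\sum_i V_i^a$ harmlessly, because the spectral-gap decay \eqref{eq:ectd} supplies a factor $e^{-t\delta(d-1)}$), and the duality step needs care because $V^a f$ need not be in the domain where \eqref{eq:RV} is literally defined when $V$ is unbounded; the paper handles this through the truncated definition \eqref{eq:L1def2}. These are fixable, but the missing uniform-in-$x$ estimate for the small-$t$ integral is a genuine gap.
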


\begin{remark}
	For technical reasons we will assume that $d \geqslant 3$. The case of $d = 1, 2$ follows from previous results, e.g. \cite{KucharskiWrobelSchrodinger}.
\end{remark}

\subsection{Structure and methods}
The main part of the proof is contained in Section \ref{sec:semigroup} where we prove that the one-dimensional semigroups $e^{-tL_i}$ decay exponentially in $t$ and $V(x)$ for small values of $t$, i.e. we have
\[
	e^{-tL_i}(\ind{})(x) \leqslant e^{-c_N t V_i(x)} \quad \textrm{for } t \leqslant N.
\]
It is noteworthy that the constant in front of the exponential in the above estimate is 1, which means that we can multiply one-dimensional bounds to estimate the full semigroup $e^{-tL}$ without constants growing with the dimension. The proof is divided into three cases depending on the value of $\abs{x_i}$ and $tV_i(x)$ but in all of them the main ingredient is the Feynman--Kac formula \eqref{eq:feynman-kac3}.

In Section \ref{sec:Linf} we use results from Section \ref{sec:semigroup} and a similar result \cite[Lemma 4.1]{KucharskiWrobelSchrodinger} giving an exponential decay of the semigroup for large values of $t$, namely
\[
	e^{-tL_i}(\ind{})(x) \leqslant e^{-ct} \quad \textrm{for } t \geqslant N,
\]
to estimate the $L^\infty$ norm of $R_V^a$.

Finally in Section \ref{sec:L1} we estimate the $L^1$ norm of the Riesz transform. We use duality between $L^\infty$ and $L^1$ which reduces estimating the $L^1$ norm of the operator $R_V^a = V^a L^{-a}$ to estimating the $L^\infty$ norm of the adjoint operator
\[
	(L^{-a} V^a) f(x) = \frac{1}{\Gamma(a)} \int_0^\infty e^{-tL}(V^a f)(x) \, t^{a-1} \, dt.
\]
Again, using the positivity-preserving property of the semigroup $e^{-tL}$ reduces the task to estimating $e^{-tL}(V^a)$. In this case, although the factorization \eqref{eq:fact0} of the semigroup as an operator still applies, it does not behave well when the semigroup is applied to $V^a$ instead of the constant function, so we use the following formula
\[
	e^{-tL}(V) = \sum_{i=1}^d e^{-tL}(V_i) = \sum_{i=1}^d e^{-tL^i}(\ind{}) \, e^{-tL_i}(V_i), \quad \text{where } L^i = L - L_i.
\]

\subsection{Notation}
We conclude the introduction by establishing some useful notation used throughout the paper.
\begin{enumerate}
	\item We abbreviate $L^p(\R^d)$ to $L^p$ and $\norm{\cdot}_{L^p}$ to $\norm{\cdot}_p$. For a linear operator $T$ acting on $L^p$ we denote its norm by $\norm{T}_{p \to p}$.
	\item By $\ind{}$ we denote the constant function 1 and by $\ind{X}$ we denote the characteristic function of the set $X$.
	\item The space of smooth compactly supported functions on $\R^d$ is denoted by $C_c^\infty$.
	\item For two quantities $A$ and $B$ we write $A \lesssim B$ if $A \leqslant CB$ for some constant $C > 0$ which may depend on $m$, $M$ and $\alpha$ and is independent of the dimension $d$. If $A \lesssim B$ and $B \lesssim A$, then we write $A \approx B$.
	\item For $x \in \R^d$ we denote its components by $x_1, \dots, x_d$, i.e. $x = (x_1, \dots, x_d)$.
	\item For a random variable $X$ defined on a probability space $(\Omega, \mathcal{F}, \P)$ and $A \subseteq \R$ we denote $\P(X \in A) \coloneqq \P\left( \left\{ \omega \in \Omega: X(\omega) \in A \right\} \right)$.
\end{enumerate}

\section{Definitions}
\label{sec:def}

We begin by defining the semigroup $e^{-tL}$ and then we proceed to defining the Riesz transform $R_V^a$. By the result of Kato \cite[p. 137]{Kato} the operator $L = -\frac{1}{2}\Delta + V$ is essentially self-adjoint on $C_c^\infty$ and hence it has a non-negative self-adjoint extension. This in turn means that $L$ generates a strongly continuous semigroup of contractions on $L^2$ which can be expressed using the Feynman--Kac formula
\begin{equation} \label{eq:feynman-kac}
	e^{-tL}f(x) = \E_x \left[ e^{-\int_0^t V(X_s) \, ds} f(X_t) \right], \quad f \in L^2,
\end{equation}
where the expectation $\E_x$ is taken with respect to the Wiener measure of the standard $d$-dimensional Brownian motion $\{X_s\}_{s>0}$ starting at $x\in \R^d;$ here $X_s=(X_s^1,\ldots,X_s^d).$ Since the right-hand makes sense also for $f \in L^\infty$, we use the Feynman--Kac formula to define $e^{-tL}$ acting on $L^\infty$ as
\begin{equation} \label{eq:feynman-kac2}
	e^{-tL}f(x) \coloneqq \E_x \left[ e^{-\int_0^t V(X_s) \, ds} f(X_t) \right], \quad f \in L^\infty.
\end{equation}
Similarly, using the fact that $V$, and hence $L$, act on each coordinate separately, see \eqref{eq:V} and \eqref{eq:Li}, we define one-dimensional semigroups $e^{-tL_i}$, $i = 1, \dots, d$, as follows
\begin{equation} \label{eq:feynman-kac3}
	e^{-tL_i}f(x) \coloneqq \E_{x_i} \left[ e^{-\int_0^t V_i(X_s) \, ds} f_{x^i}(X_t^i) \right], \quad f \in L^\infty,
\end{equation}
where
\[
	f_{x^i}(y) = f(x_1, \dots, x_{i-1}, y, x_{i+1}, \dots, x_d).
\]
Here the expectation $\E_{x_i}$ is taken with regards to the Wiener measure of the standard one-dimensional Brownian motion $\{X_s^i\}_{s > 0}$ starting at $x_i \in \R$.

As the next lemma shows, this is the definition that suits best our purpose of factorizing the semigroup $e^{-tL}$ into one-dimensional factors $e^{-tL_i}$.

\begin{lemma} \thlabel{lem:schrodinger_dimfree_fact}
	Fix $d$ and let the $d$-dimensional semigroup $e^{-tL}$ be given by \eqref{eq:feynman-kac2} and the one-dimensional semigroup $e^{-tL_i}$ by \eqref{eq:feynman-kac3}. Then for $f \in L^\infty$ we have
	\begin{equation} \label{eq:fact1}
		e^{-tL} f(x) = \left( \left(\prod_{i=1}^d e^{-tL_i} \right) f \right)(x) \quad \text{and} \quad e^{-tL}(\ind{})(x) = \prod_{i=1}^d \left( e^{-tL_i}(\ind{})(x) \right).
	\end{equation}
\end{lemma}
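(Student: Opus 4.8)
The plan rests on two facts: the coordinate processes $\{X_s^1\}_{s>0},\dots,\{X_s^d\}_{s>0}$ of the $d$-dimensional Brownian motion are independent one-dimensional Brownian motions, and the additive structure \eqref{eq:V} of the potential makes the Feynman--Kac exponential split as a product. Indeed, since each $V_i$ depends only on the $i$-th coordinate, along every path
\[
	\int_0^t V(X_s)\,ds = \sum_{i=1}^d \int_0^t V_i(X_s^i)\,ds, \qquad \text{so} \qquad e^{-\int_0^t V(X_s)\,ds} = \prod_{i=1}^d e^{-\int_0^t V_i(X_s^i)\,ds}.
\]
As $V \geqslant 0$, each factor lies in $[0,1]$, so all integrands below are dominated by $\norm{f}_\infty$ and every interchange of expectations via Fubini's theorem is legitimate.

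For the second identity I would take $f = \ind{}$ in \eqref{eq:feynman-kac2}, insert the product factorization of the exponential, and use independence of the coordinate processes to pass the expectation inside the product; each factor is then $e^{-tL_i}(\ind{})(x)$ by \eqref{eq:feynman-kac3}. This is the easy half.

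For the first identity I would argue by induction, peeling the operators $e^{-tL_i}$ off one at a time (the order is irrelevant since they act on different variables, so I fix, say, $e^{-tL_1}\circ\cdots\circ e^{-tL_d}$). Fixing $x$ and realizing independent one-dimensional Brownian motions $\{B_s^i\}_{s>0}$ with $B_0^i = x_i$ on a common probability space with expectation $\E$, so that $(B^1,\dots,B^d)$ is a copy of $X$, I would set
\[
	g_k(x) = \E\left[ \exp\Big(-\sum_{i=k+1}^d \int_0^t V_i(B_s^i)\,ds\Big)\, f\big(x_1,\dots,x_k,B_t^{k+1},\dots,B_t^d\big) \right], \qquad g_d = f,
\]
so that $g_0(x) = e^{-tL}f(x)$ by \eqref{eq:feynman-kac2} and the first display. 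The crux is the identity $e^{-tL_k}g_k = g_{k-1}$: applying \eqref{eq:feynman-kac3} in the $k$-th variable to the bounded function $g_k$ introduces a fresh one-dimensional Brownian motion independent of $B^{k+1},\dots,B^d$, which after relabelling and one use of Fubini merges into the expectation defining $g_{k-1}$. Iterating from $k = d$ down to $k = 1$ gives $\big((\prod_{i=1}^d e^{-tL_i})f\big)(x) = g_0(x) = e^{-tL}f(x)$.

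The only real obstacle is bookkeeping: the repeated merging of nested expectations requires Fubini and the tower property, and the intermediate functions $g_k$ (equivalently $x \mapsto e^{-tL_k}h(x)$ for bounded measurable $h$) must be measurable. Both are standard, and since every integrand in sight is bounded by $\norm{f}_\infty$ the Fubini steps are routine, so I expect no genuine difficulty.
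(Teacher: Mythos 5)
Your proposal is correct and follows essentially the same route as the paper: an induction that applies the one-dimensional Feynman--Kac semigroups one coordinate at a time and merges the nested expectations using independence of the Brownian coordinates (with Fubini justified by boundedness), together with the factorization of the exponential and independence for the identity with $f=\ind{}$. The only cosmetic difference is that you run the induction by peeling coordinates off from the far end rather than building the product up from $k=1$ as the paper does, which changes nothing of substance.
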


\begin{proof}
	We will prove by induction that for $k = 1, \dots, d$ we have
	\begin{equation} \label{eq:fact_ind}
		\left( \left(\prod_{i=1}^k e^{-tL_i} \right) f \right) (x) = \E_{(x_1, \dots, x_k)} \left[ e^{-\int_0^t \sum_{i=1}^k V_i(X_s) \, ds} f(X_t^1, \dots, X_t^k, x_{k+1}, \dots, x_d) \right],
	\end{equation}
	which justifies the first formula in \eqref{eq:fact1} if we take $k = d$.
	
	The case $k=1$ is clear from the definition \eqref{eq:feynman-kac3} of $e^{-tL_1}$. Now suppose that \eqref{eq:fact_ind} holds. Then
	\begin{align*}
		&\left( \left(\prod_{i=1}^{k+1} e^{-tL_i} \right) f \right) (x) = \E_{x_{k+1}} \left[ e^{-\int_0^t V_{k+1}(X_s) \, ds} \left( \left(\prod_{i=1}^k e^{-tL_i} \right) f \right)_{x^{k+1}}(X_t^{k+1}) \right] \\
		&= \E_{x_{k+1}} \left[ e^{-\int_0^t V_{k+1}(X_s) \, ds} \, \E_{(x_1, \dots, x_k)} \left[ e^{-\int_0^t \sum_{i=1}^k V_i(X_s) \, ds} f(X_t^1, \dots, X_t^k, X_t^{k+1}, x_{k+2}, \dots, x_d) \right] \right] \\
		&= \E_{(x_1, \dots, x_{k+1})} \left[ e^{-\int_0^t \sum_{i=1}^{k+1} V_i(X_s) \, ds} f(X_t^1, \dots, X_t^{k+1}, x_{k+2}, \dots, x_d) \right].
	\end{align*}
	Note that we can use the same Brownian motion in the inner and in the outer expected value since its coordinates are independent of each other and $V_i(X_s)$ depends only on $X_s^i$.
	
	The second formula in \eqref{eq:fact1} follows from the definitions of $e^{-tL}$ and $e^{-tL_i}$ and the fact that the coordinates of $d$-dimensional Brownian motion are independent.
\end{proof}

Now we take $a > 0$ and a non-negative function $f \in L^\infty$ and define the Riesz transform
\begin{equation} \label{eq:RV2}
	R_V^a f(x) = \frac{V(x)^a}{\Gamma(a)} \int_0^\infty e^{-tL}f(x) \, t^{a-1} \, dt,
\end{equation}
where $e^{-tL}f(x)$ is defined as in \eqref{eq:feynman-kac2}. Lastly, we use the positivity-preserving property of the semigroup $e^{-tL}$, which means that $e^{-tL} f \geqslant 0$ whenever $f \geqslant 0$, to rewrite the main theorem in a simpler form. Namely, we have
\[
	\abs{e^{-tL}f(x)} \leqslant e^{-tL}\left( \norm{f}_\infty \ind{} \right)(x) = \norm{f}_\infty \, e^{-tL}(\ind{})(x), \quad f \in L^\infty,
\]
which means that the Riesz transform $R_V^a$ is bounded on $L^\infty$ if
\[
	\norm{R_V^a(\ind{})} < \infty
\]
with its norm being
\[
	\norm{R_V^a}_{\infty \to \infty} = \norm{R_V^a(\ind{})}_{\infty}.
\]
Thus, \thref{thm1} can be rewritten as
\begin{theorem} \thlabel{thm1'}
	Fix $0 < \alpha \leqslant 2$ and let $V$ given by \eqref{eq:V} satisfy \eqref{eq:VmM}. For $a > 0$ let the Riesz transform $R_V^a$ be defined as in \eqref{eq:RV2}. Then there is a constant $C > 0$ depending on $m$, $M$, and $\alpha$ and independent of the dimension $d$ such that
	\[
		\norm{R_V^a (\ind{})}_{\infty} \leqslant C.
	\]
\end{theorem}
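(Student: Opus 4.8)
The plan is to insert the factorization \eqref{eq:fact1} into \eqref{eq:RV2} and reduce everything to two pointwise bounds on the one-dimensional semigroups: the small-time bound
\[
	e^{-tL_i}(\ind{})(x) \leqslant e^{-c_N t V_i(x)}, \qquad t \leqslant N,
\]
proved in Section \ref{sec:semigroup}, and the large-time bound $e^{-tL_i}(\ind{})(x) \leqslant e^{-ct}$ for $t \geqslant N$, which is \cite[Lemma 4.1]{KucharskiWrobelSchrodinger}; here $N, c_N, c > 0$ depend only on $m$, $M$, $\alpha$. Multiplying the first bound over $i = 1,\dots,d$ and using $V = \sum_i V_i$ gives
\[
	e^{-tL}(\ind{})(x) = \prod_{i=1}^d e^{-tL_i}(\ind{})(x) \leqslant e^{-c_N t V(x)}, \qquad t \leqslant N,
\]
while multiplying the second gives $e^{-tL}(\ind{})(x) \leqslant e^{-cdt} \leqslant e^{-ct}$ for $t \geqslant N$. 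The essential point is that each one-dimensional bound has prefactor exactly $1$, so these products acquire no constant that grows with $d$.

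I would then split the integral in \eqref{eq:RV2} at $t = N$. Assuming $V(x) > 0$ (otherwise $R_V^a(\ind{})(x) = 0$ and there is nothing to prove), for the range $t \leqslant N$ I insert the bound above and enlarge the domain of integration:
\[
	\frac{V(x)^a}{\Gamma(a)} \int_0^N e^{-tL}(\ind{})(x)\, t^{a-1}\, dt \leqslant \frac{V(x)^a}{\Gamma(a)} \int_0^\infty e^{-c_N t V(x)}\, t^{a-1}\, dt = c_N^{-a},
\]
a dimension-free constant. For the tail $t \geqslant N$, observe that by \eqref{eq:feynman-kac3} the quantity $e^{-tL_i}(\ind{})(x) = \E_{x_i}\bigl[ e^{-\int_0^t V_i(X_s)\, ds} \bigr]$ is non-increasing in $t$ because $V_i \geqslant 0$, so $e^{-tL_i}(\ind{})(x) \leqslant e^{-NL_i}(\ind{})(x) \leqslant e^{-c_N N V_i(x)}$; taking the product yields $e^{-tL}(\ind{})(x) \leqslant e^{-c_N N V(x)}$ for $t \geqslant N$. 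Combining this with $e^{-tL}(\ind{})(x) \leqslant e^{-ct}$ through $\min(A,B) \leqslant \sqrt{AB}$ gives $e^{-tL}(\ind{})(x) \leqslant e^{-c_N N V(x)/2}\, e^{-ct/2}$ for $t \geqslant N$, whence
\[
	\frac{V(x)^a}{\Gamma(a)} \int_N^\infty e^{-tL}(\ind{})(x)\, t^{a-1}\, dt \lesssim V(x)^a e^{-c_N N V(x)/2} \leqslant \sup_{u \geqslant 0} u^a e^{-c_N N u/2} < \infty.
\]
Adding the two contributions proves \thref{thm1'}.

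Given the one-dimensional semigroup estimates, the argument above is essentially bookkeeping, so the real obstacle sits upstream in Section \ref{sec:semigroup}: the small-time bound must be established with constant exactly $1$ in front of the exponential, since any $d$-independent constant there would be raised to the power $d$ after taking the product over coordinates — this is the reason for the case analysis in $\abs{x_i}$ and $tV_i(x)$ there. Within the present reduction, the only delicate point is the tail: keeping merely the decay $e^{-ct}$ and discarding the decay in $V(x)$ would leave the uncontrolled factor $V(x)^a$, so one genuinely needs both decays at once, which is why the monotonicity step $e^{-tL}(\ind{})(x) \leqslant e^{-NL}(\ind{})(x)$ followed by the elementary interpolation is required.
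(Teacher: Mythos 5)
Your proof is correct, and for the tail $t \geqslant N$ it takes a genuinely different (and leaner) route than the paper. The small-time part is identical: insert \eqref{eq:lemma1} into the factorization \eqref{eq:fact1} to get $e^{-tL}(\ind{})(x) \leqslant e^{-c_N t V(x)}$ and integrate, giving $c_N^{-a}$. For the tail, the paper writes $e^{-tL}(\ind{}) \leqslant e^{-\frac{N}{2}L}(\ind{})\,e^{-\frac{t}{2}\delta d}$ via the semigroup property and \eqref{eq:ectd}, splits into the cases $a<1$ and $a\geqslant 1$, uses subadditivity of $u\mapsto u^a$ resp.\ Jensen's inequality to pass from $V(x)^a$ to $\sum_i V_i(x)^a$ (with the $d^{a-1}$ loss compensated by the $(\delta d)^{-a}$ from the integral), and then bounds each one-dimensional quantity $V_i(x)^a e^{-\frac{N}{2}L_i}(\ind{})(x)$ as in \eqref{eq:xexmax}. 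You instead exploit the pathwise monotonicity of $t\mapsto e^{-tL_i}(\ind{})(x)$ to retain the full $V(x)$-decay at time $N$, namely $e^{-tL}(\ind{})(x)\leqslant e^{-c_N N V(x)}$ for $t\geqslant N$, combine it with the crude in-$t$ decay $e^{-ct}$ by taking the geometric mean, and then bound $V(x)^a e^{-c_N N V(x)/2}$ by $\sup_{u\geqslant 0} u^a e^{-c_N N u/2}$; this avoids the case distinction in $a$, the coordinate-wise splitting of $V^a$, and any use of the dimension-improved exponent $e^{-\delta d t}$. What the paper's heavier bookkeeping buys is the per-coordinate reduction that is reused almost verbatim in the $L^1$ argument of Section \ref{sec:L1}, where one must handle $e^{-tL}(V^a)$ rather than $e^{-tL}(\ind{})$ and your shortcut is not available; for \thref{thm1'} alone, your argument suffices. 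You are also right about where the real difficulty lies: the constant $1$ in front of the exponential in \eqref{eq:lemma1} is what makes the $d$-fold product harmless, and both decays (in $t$ and in $V(x)$) are genuinely needed in the tail since $\int_N^\infty t^{a-1}\,dt$ diverges for every $a>0$. The only cosmetic point is the degenerate value $V(x)=0$ (only $x=0$ under \eqref{eq:VmM}), where \eqref{eq:RV2} is interpreted as $0$; the paper tacitly does the same.
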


\section{One-dimensional estimates}
\label{sec:semigroup}
In this section we prove the aforementioned exponential decay of the one-dimensional semigroup which we will then combine to estimate the semigroup $e^{-tL}$.
\begin{lemma} \thlabel{lemma}
	For every $N > 0$ there is a constant $c_N > 0$ such that
	\begin{equation} \label{eq:lemma1}
		e^{-tL_i}(\ind{})(x) \leqslant e^{-c_N tV_i(x)}
	\end{equation}
	for all $x \in\R^d$ and $0 \leqslant t \leqslant N$. Moreover, if $\abs{x_i} \leqslant 4$, then
	\begin{equation} \label{eq:lemma2}
		e^{-tL_i}(\ind{})(x) \leqslant e^{-c_N\left( t^{\frac{\alpha}{2}+1} + tV_i(x) \right)}, \quad t \leqslant N.
	\end{equation}
\end{lemma}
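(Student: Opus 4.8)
The plan is to work directly from the Feynman--Kac representation \eqref{eq:feynman-kac3} applied to $f \equiv \ind{}$, which gives
\[
	e^{-tL_i}(\ind{})(x) = \E_{x_i}\left[ e^{-\int_0^t V_i(X_s)\,ds} \right],
\]
where $\{X_s\}$ is a one-dimensional Brownian motion started at $x_i$, and to split the analysis according to the size of $\abs{x_i}$ and of $tV_i(x)$, as announced in the introduction. Throughout we use the two-sided bound \eqref{eq:VmM}, so that $\int_0^t V_i(X_s)\,ds \approx \int_0^t \abs{X_s}^\alpha\,ds$. Since we only need an \emph{upper} bound on the exponential moment, the key point is to produce, on a sufficiently large-probability event, a lower bound on $\int_0^t \abs{X_s}^\alpha\,ds$ of the right order, and then to absorb the complementary event using the trivial bound $e^{-\int_0^t V_i(X_s)\,ds} \leqslant 1$.

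\textbf{Large $\abs{x_i}$.} Suppose first $\abs{x_i}$ is large (say $\abs{x_i} \geqslant 8$). On the event $A = \{\sup_{0\leqslant s\leqslant t}\abs{X_s - x_i} \leqslant \abs{x_i}/2\}$ the path stays in $\{\abs{y}\geqslant \abs{x_i}/2\}$, so $\int_0^t V_i(X_s)\,ds \geqslant m\,(\abs{x_i}/2)^\alpha t \gtrsim t V_i(x)$. By the reflection principle $\P_{x_i}(A^c) \leqslant 4\,\P(|Z| \geqslant \abs{x_i}/(2\sqrt t))$ with $Z$ standard Gaussian, which for $t \leqslant N$ and $\abs{x_i}$ large is bounded by $C_N \exp(-\abs{x_i}^2/(CNt))$; since $\alpha \leqslant 2$ this Gaussian tail is $\leqslant \tfrac12 e^{-c t V_i(x)}$ once $\abs{x_i}$ is large enough (this is exactly where the restriction $\alpha \leqslant 2$ enters, and presumably the content of the reference \eqref{eq:alpha2}). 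Splitting $\E_{x_i}[\cdots] = \E_{x_i}[\cdots;A] + \E_{x_i}[\cdots;A^c] \leqslant e^{-c t V_i(x)} + \tfrac12 e^{-c t V_i(x)}$ and adjusting constants yields \eqref{eq:lemma1} in this range. (A mild technical point: to turn $e^{-ctV_i} + \tfrac12 e^{-c't V_i}$ into a clean $e^{-c_N t V_i}$ one uses that $tV_i(x)$ is bounded below on this event or simply that $1+\tfrac12 \leqslant e^{(\log 3)/2}$ combined with shrinking the constant — routine.)

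\textbf{Bounded $\abs{x_i}$, i.e. $\abs{x_i}\leqslant 4$ (and the remaining band up to $8$).} Here $V_i(x)$ is bounded by a constant, so $tV_i(x) \lesssim t \lesssim N$, and it suffices to prove the stronger statement \eqref{eq:lemma2}, namely decay of order $t^{\alpha/2+1} + tV_i(x)$. The natural scaling heuristic is that over time $t$ a Brownian path typically reaches distance $\sqrt t$ from its start, so $\int_0^t \abs{X_s}^\alpha\,ds$ is typically of order $t \cdot t^{\alpha/2} = t^{\alpha/2+1}$. To make this rigorous I would first handle $t \geqslant c_0$ for a fixed small $c_0$ by a crude argument (for $t$ in a bounded range away from $0$, $e^{-tL_i}(\ind{})(x) \leqslant e^{-c}$ for an absolute $c$, which dominates $e^{-c_N(t^{\alpha/2+1}+tV_i)}$ after shrinking $c_N$; one can extract such a bound from the event that the path spends time $\gtrsim 1$ at distance $\gtrsim 1$ from the origin regardless of the starting point in $[-4,4]$). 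For small $t$, rescale: by Brownian scaling $X_{s} = x_i + \sqrt t\, B_{s/t}$ in law with $B$ a standard BM, so $\int_0^t \abs{X_s}^\alpha\,ds = t^{\alpha/2+1}\int_0^1 \abs{x_i/\sqrt t + B_u}^\alpha\,du$. Then on the fixed-probability event $\{\inf_{0\leqslant u\leqslant 1}\abs{x_i/\sqrt t + B_u} \geqslant 1\}$ — whose probability is bounded below uniformly, since a standard BM on $[0,1]$ has a fixed positive chance of moving distance $\geqslant 2$ in a prescribed direction — the integral is $\geqslant t^{\alpha/2+1}$, hence $e^{-\int_0^t V_i(X_s)ds} \leqslant 1$ off this event and $\leqslant e^{-m t^{\alpha/2+1}}$ on it, giving $\E_{x_i}[\cdots] \leqslant 1 - p + p\,e^{-m t^{\alpha/2+1}} \leqslant e^{-c t^{\alpha/2+1}}$ for small $t$. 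Finally, to also gain the $tV_i(x)$ term for $\abs{x_i}\leqslant 4$, note that when $\abs{x_i}$ is in a compact set $t V_i(x) \lesssim t \lesssim t^{\alpha/2+1}\cdot t^{-\alpha/2}$ is \emph{not} automatically dominated for small $t$, so one instead runs the same scaling argument on the event that the path stays within distance $\abs{x_i}/2$ of $x_i$ when $\abs{x_i}$ is, say, $\geqslant 2$ (giving $\gtrsim tV_i(x)$), and uses $V_i(x)\lesssim 1$ together with the pure $t^{\alpha/2+1}$ bound when $\abs{x_i}\leqslant 2$; combining the two regimes and taking the minimum of the constants gives \eqref{eq:lemma2}.

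\textbf{Main obstacle.} The routine parts are the probabilistic estimates (reflection principle, Brownian scaling, small-ball/large-deviation lower bounds for BM on a unit interval). The genuinely delicate point is the \emph{large $\abs{x_i}$} regime: one must check that the Gaussian escape probability $\exp(-c\abs{x_i}^2/t)$ is dominated by $\exp(-c' t\abs{x_i}^\alpha)$ uniformly for $0\leqslant t\leqslant N$, which forces $\alpha \leqslant 2$ (for $t$ of order $\abs{x_i}^{(\alpha-2)/?}$ the two exponents compete), and to keep the constant in front of the final exponential equal to $1$ — which, as the introduction stresses, is essential for multiplying the $d$ one-dimensional bounds without dimensional loss. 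Achieving constant $1$ means the $e^{-ctV_i}+\tfrac12 e^{-c'tV_i}$-type sums must be re-absorbed by further shrinking $c_N$ and using that $tV_i$ is bounded below on the relevant good events (or, when it is not, that the excess is harmless because $V_i$ is then bounded), so some care in bookkeeping the constants is the last thing to get right.
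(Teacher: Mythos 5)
Your skeleton (Feynman--Kac for $f=\ind{}$, a split according to the size of $\abs{x_i}$, a confinement event controlled by the reflection principle, the trivial bound $e^{-\int_0^t V_i}\leqslant 1$ off that event, and $\alpha\leqslant 2$ to dominate the Gaussian escape probability) matches the paper's treatment of the large-$\abs{x_i}$ regime, and your scaling/small-ball treatment of bounded $\abs{x_i}$ is a legitimate alternative to the paper's method there. But the proposal has genuine gaps at exactly the delicate points. First, in the regime $\abs{x_i}$ large and $tV_i(x)$ small your absorption of the prefactor fails: you arrive at $e^{-ctV_i}+\tfrac12 e^{-c'tV_i}$ and propose to remove the factor $3/2$ by ``shrinking the constant'', or because ``$tV_i(x)$ is bounded below'', or because ``$V_i$ is then bounded'' --- none of which holds when $t\to 0$ with $\abs{x_i}$ large, where $tV_i(x)$ is arbitrarily small, $V_i(x)$ is large, and $\tfrac32 e^{-ctV_i}\leqslant e^{-c_NtV_i}$ is simply false. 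This is precisely the regime where the paper abandons the additive split and instead proves a differential inequality $\frac{d}{dt}e^{-tL_i^n}(\ind{})(x)\leqslant -cV_i^n(x)$ for the truncated potential and integrates it, which produces prefactor exactly $1$. Your additive split can be rescued, but only by keeping the error in the form $\E\leqslant e^{-ctV_i}+\P(A^c)\bigl(1-e^{-ctV_i}\bigr)$ and checking an elementary inequality such as $2e^{cu/2}\leqslant e^{cu}+1$; as written the step is wrong. Moreover, making $\P(A^c)\leqslant\tfrac12 e^{-c_NtV_i(x)}$ valid for all $t\leqslant N$ forces $\abs{x_i}\geqslant R_0$ with $R_0$ depending on $N$ (and, for $\alpha=2$, $c_N\lesssim N^{-2}$), not $\abs{x_i}\geqslant 8$; so either your bounded case must cover $\abs{x_i}\leqslant R_0(N)$, or you should, as the paper does, first prove everything for $t\leqslant t_0$ and then extend to $t\leqslant N$ using $e^{-tL_i}(\ind{})\leqslant e^{-t_0L_i}(\ind{})$ --- an extension step absent from your argument.

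Second, in the bounded regime you lose the $tV_i(x)$ term exactly where it matters: for $\sqrt t\ll\abs{x_i}\leqslant 2$ one has $tV_i(x)\approx t\abs{x_i}^\alpha\gg t^{\frac{\alpha}{2}+1}$, so ``$V_i(x)\lesssim 1$ together with the pure $t^{\frac{\alpha}{2}+1}$ bound'' does not yield \eqref{eq:lemma2}, nor even \eqref{eq:lemma1}, in that range; the case split must be at $\abs{x_i}\approx\sqrt t$, not at a fixed constant. In addition, the event $\{\inf_{0\leqslant u\leqslant 1}\abs{x_i/\sqrt t+B_u}\geqslant 1\}$ has probability zero when $\abs{x_i}/\sqrt t<1$ (the path starts inside the forbidden region), so it must be replaced by, say, the event that $B$ has the appropriate sign and modulus at least $1$ on $[1/2,1]$. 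The paper sidesteps this entire case analysis for $\abs{x_i}\leqslant 4$ by the expansion $e^{-y}\leqslant 1-y+\tfrac{y^2}{2}$ combined with $\E_{x_i}\int_0^t V_i(X_s)\,ds\gtrsim t^{\frac{\alpha}{2}+1}+tx_i^\alpha$ and a matching second-moment bound via Cauchy--Schwarz, which gives both terms with prefactor $1$ in one stroke; if you keep the small-ball route you must redo the split at $\abs{x_i}\approx\sqrt t$ and carry out the same careful constant-$1$ bookkeeping as above.
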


\begin{proof}
	First we will show that \eqref{eq:lemma1} is satisfied for $0 \leqslant t \leqslant t_0$ for some $t_0$ and then we will extend the estimate to all $0 \leqslant t \leqslant N$.
	
	We begin with the case $\abs{x_i} \leqslant 4$. We will make use of the inequality
	\begin{equation} \label{eq:exptaylor}
		e^{-x} \leqslant 1 - x + \frac{x^2}{2}, \qquad x \geqslant 0.
	\end{equation}
	The Feynman--Kac formula \eqref{eq:feynman-kac3} together with \eqref{eq:exptaylor} give
	\begin{equation} \label{eq:fktaylor}
		e^{-tL_i}(\ind{})(x) \leqslant 1 - \E_{x_i} \left[ \int_0^t V_i(X_s) \, ds \right] + \frac{1}{2} \E_{x_i} \left[ \left( \int_0^t V_i(X_s) \, ds \right)^2 \right].
	\end{equation}
	We need to estimate the first and the second expected value in the expression above. In order to do this we will use the fact that for any $a,b \geqslant 0$ and $\alpha > 0$ we have
	\begin{equation} \label{eq:abalpha}
		(a + b)^\alpha \approx a^\alpha + b^\alpha,
	\end{equation}
	and an estimate for the moments of normal distribution
	\begin{equation}
		\E \abs{X_s^i}^\alpha \approx s^{\alpha/2}.
	\end{equation}
	
	Let us begin by estimating $\E_{x_i} V_i(X_s)$ from below and assume without loss of generality that $x_i \geqslant 0$.
	\begin{align*}
		\E_{x_i} \, V_i(X_s) \gtrsim \E_0 \abs{X_s^i + x_i}^\alpha \geqslant \E_0 \left[ \ind{\{X_s^i \geqslant 0\}} (X_s^i + x_i)^\alpha \right] \approx s^{\alpha/2} + x_i^{\alpha}
	\end{align*}
	Integrating this gives
	\[
		\E_{x_i} \left[ \int_0^t V_i(X_s) \, ds \right] \gtrsim t^{\frac{\alpha}{2}+1} + tx_i^\alpha.
	\]
	Now we estimate the last term in \eqref{eq:fktaylor} using Cauchy--Schwarz inequality.
	\begin{align*}
		\E_{x_i} \left[ \left( \int_0^t V_i(X_s) \, ds \right)^2 \right] &\lesssim t \int_0^t \E_0 \left[ \abs{X_s^i + x_i}^{2\alpha} \right] ds \lesssim t \int_0^t \E_0 \left[ \abs{X_s^i}^{2\alpha} + x_i^{2\alpha} \right] ds \\
		&\approx t \int_0^t s^\alpha + x_i^{2\alpha} ds \approx t^{\alpha+2} + t^2 x_i^{2\alpha} \lesssim \left( t^{\frac{\alpha}{2}+1} + tx_i^\alpha \right)^2.
	\end{align*}
	Plugging this into \eqref{eq:fktaylor}, recalling that $\abs{x_i} \leqslant 4$, and choosing $t_0$ sufficiently small yields
	\begin{align*}
		e^{-tL_i}(\ind{})(x) &\leqslant 1 - c_1 \left( t^{\frac{\alpha}{2}+1} + tx_i^\alpha \right) + c_2 \left( t^{\frac{\alpha}{2}+1} + tx_i^\alpha \right)^2 \\
		&\leqslant 1 - c \left( t^{\frac{\alpha}{2}+1} + tx_i^\alpha \right) \leqslant e^{-c \left( t^{\frac{\alpha}{2}+1} + tx_i^\alpha \right)}
	\end{align*}
	which implies \eqref{eq:lemma1} and \eqref{eq:lemma2} for $t \leqslant t_0$.
	
	The second case is when $\abs{x_i} > 4$ and $tV_i(x) \leqslant 2A\log 5$, where $A = \frac{2^\alpha M}{m}$ with $m$ and $M$ as in \eqref{eq:VmM}. We will roughly show that then we have
	\begin{equation} \label{eq:dtetL}
		\frac{d}{dt} e^{-tL_i}(\ind{})(x) = -e^{-tL_i}(V_i)(x) \leqslant -cV_i(x).
	\end{equation}
	However since the equality may not hold, we replace $V_i$ with $V_i^n(x) = \min(V_i(x), n)$ for any $n > 0$, establish \eqref{eq:dtetL} for $V_i^n$, then we prove \eqref{eq:lemma1} for $V_i^n$ and finally we deduce \eqref{eq:lemma1} for $V_i$.
	
	Recall that $V_i$ satisfies $m\abs{x_i}^\alpha \leqslant V_i(x) \leqslant M\abs{x_i}^\alpha$ and take $x_i, y_i \in \R$ such that $\abs{x_i-y_i} \leqslant \frac{\abs{x_i}}{2}$. Then $\frac{\abs{x_i}}{2} \leqslant \abs{y_i} \leqslant 2\abs{x_i}$ so that we have
	\[
		V_i(y) \leqslant M\abs{y_i}^\alpha \leqslant 2^\alpha M \abs{x_i}^\alpha \leqslant \frac{2^\alpha M}{m} V_i(x) = AV_i(x)
	\]
	and
	\[
		V_i(y) \geqslant m\abs{y_i}^\alpha \geqslant m \frac{\abs{x_i}^\alpha}{2^\alpha} \geqslant \frac{m}{2^\alpha M} V_i(x) = \frac{1}{A} V_i(x)
	\]
	We also calculate the probability that $\sup_{0 \leqslant s \leqslant t} \abs{X_s^i-x_i} \geqslant \frac{\abs{x_i}}{2}$ using the reflection principle to get
	\begin{equation} \label{eq:prob}
		\P \left( \sup_{0 \leqslant s \leqslant t} \abs{X_s^i-x_i} \geqslant \frac{\abs{x_i}}{2} \right) \leqslant 4e^{-\frac{\abs{x_i}^2}{8t}}.
	\end{equation}
	
	Now, for $n > 0$, we define $V_i^n(x) = \min(V_i(x), n)$ and $L_i^n = -\frac{1}{2}\frac{\partial^2}{\partial x_i^2} + V_i^n$ and use the Feynman--Kac formula and \eqref{eq:prob} to get
	\begin{align*}
		e^{-tL_i^n}(V_i^n)(x) &= \E_{x_i} \left[ e^{-\int_0^t V_i^n(X_s) \, ds} V_i^n(X_t) \right] \geqslant \E_{x_i} \left[ e^{-\int_0^t V_i(X_s) \, ds} V_i^n(X_t) \right] \\
		&\geqslant \P\left( \forall_{0 \leqslant s \leqslant t} \ \tfrac{V_i^n(x)}{A} \leqslant V_i^n(X_s) \text{ and } V_i(X_s) \leqslant AV_i(x) \right) \frac{V_i^n(x)}{A} e^{-AtV_i(x)} \\
		&\geqslant \P\left( \forall_{0 \leqslant s \leqslant t} \ \tfrac{V_i(x)}{A} \leqslant V_i(X_s) \leqslant AV_i(x) \right) \frac{V_i^n(x)}{A} e^{-AtV_i(x)} \\
		&\geqslant \frac{V_i^n(x)}{A} \left( 1 - 8e^{-\frac{\abs{x_i}^2}{8t}} \right) e^{-2A^2 \log 5} \\
		&\geqslant \frac{V_i^n(x)}{A} \left( 1 - 8e^{-\frac{4^2}{8t_0}} \right) e^{-2A^2 \log 5} \geqslant cV_i^n(x)
	\end{align*}
	if $t_0$ is sufficiently small, which proves that
	\begin{equation} \label{eq:dtetL'}
		\frac{d}{dt} e^{-tL_i^n}(\ind{})(x) = -e^{-tL_i^n}(V_i^n)(x) \leqslant -cV_i^n(x).
	\end{equation}
	Differentiation is allowed here by the Leibniz integral rule. Now we show that this implies a version of \eqref{eq:lemma1} with $V_i^n$. Consider the function
	\[
		f(t) = e^{-tL_i^n}(\ind{})(x) \, e^{ctV_i^n(x)}.
	\]
	If we differentiate it and use \eqref{eq:dtetL'}, we get
	\begin{align*}
		f'(t) &= \frac{d}{dt} e^{-tL_i^n}(\ind{})(x) \, e^{ctV_i^n(x)} + cV_i^n(x) \, e^{-tL_i^n}(\ind{})(x) \, e^{ctV_i^n(x)} \\
		&\leqslant -cV_i^n(x) \, e^{ctV_i^n(x)} + cV_i^n(x) \, e^{ctV_i^n(x)} = 0.
	\end{align*}
	Since $f(0) = 1$, we conclude that
	\[
		e^{-tL_i^n}(\ind{})(x) \leqslant e^{-ctV_i^n(x)}.
	\]
	Now we take the limit as $n$ goes to infinity on both sides of the inequality. The left-hand side becomes
	\[
		\lim_{n \to \infty} e^{-tL_i^n}(\ind{})(x) = \lim_{n \to \infty} \E_{x_i} \left[ e^{-\int_0^t V_i^n(X_s) \, ds} \right] = \E_{x_i} \left[ e^{-\int_0^t V_i(X_s) \, ds} \right] = e^{-tL_i}(\ind{})(x).
	\]
	Passing with the limit under the integral sign is allowed since the integrand is dominated by the constant function $\ind{}$ which is integrable.
	On the right-hand side we get
	\[
		\lim_{n \to \infty} e^{-ctV_i^n(x)} = e^{-ctV_i(x)},
	\]
	so altogether we get \eqref{eq:lemma1}.
	
	The last case to consider is $\abs{x_i} > 4$ and $tV_i(x) > 2A\log 5$. We choose sufficiently small $t_0$ and use \eqref{eq:prob} to obtain
	\begin{equation} \label{eq:alpha2}
		\begin{aligned}
			e^{-tL_i}(\ind{})(x) &\leqslant e^{-\frac{tV_i(x)}{A}} \P \left( \forall_{0 \leqslant s \leqslant t} \ \tfrac{V_i(x)}{A} \leqslant V_i(X_s) \right) + 1 \cdot \P \left( \exists_{0 \leqslant s \leqslant t} \ \tfrac{V_i(x)}{A} > V_i(X_s) \right) \\
			&\leqslant e^{-\frac{tV_i(x)}{A}} + 4e^{-\frac{\abs{x_i}^2}{8t}} \leqslant 5e^{-\frac{tV_i(x)}{A}} \leqslant e^{-\frac{tV_i(x)}{2A}},
		\end{aligned}
	\end{equation}
	which is \eqref{eq:lemma1}. In the second-to-last inequality we used the assumption $\alpha \leqslant 2$.
	
	Recall that we have just proved that
	\[
		e^{-tL_i}(\ind{})(x) \leqslant e^{-c tV_i(x)}
	\]
	is satisfied for $t \leqslant t_0$ and $x \in \R^d$. If $N \leqslant t_0$, then the proof is finished, so suppose that $N > t_0$ and take $t \in [t_0, N]$. Then we have
	\[
		e^{-tL_i}(\ind{})(x) \leqslant e^{-t_0L_i}(\ind{})(x) \leqslant e^{-ct_0V_i(x)} = e^{-c\frac{t_0}{t} tV_i(x)} \leqslant e^{-c\frac{t_0}{N} tV_i(x)} = e^{-c_N tV_i(x)}.
	\]
	The inequality \eqref{eq:lemma2} can be extended to $t \in [0, N]$ in a very similar way. Suppose that $N > t$ and take $t \in [t_0, N]$. Then
	\[
		e^{-tL_i}(\ind{})(x) \leqslant e^{-c\left( t_0^{\frac{\alpha}{2}+1} + t_0 V_i(x) \right)} \leqslant e^{-c\left( \left( \frac{t_0}{N} \right)^{\frac{\alpha}{2}+1} t^{\frac{\alpha}{2}+1} + \frac{t_0}{N} t V_i(x) \right)} = e^{-c_N\left( t^{\frac{\alpha}{2}+1} + tV_i(x) \right)}.
	\]
	This finishes the proof.
\end{proof}

\section{$L^\infty$ dimension-free estimates --- proof of \thref{thm1'}}
\label{sec:Linf}
In this section we prove \thref{thm1'} using one-dimensional estimates from \thref{lemma}. The other relevant result is \cite[Lemma 4.1]{KucharskiWrobelSchrodinger}, which guarantees that there exist universal constants $C > 0$ and $\delta' > 0$ such that
\[
	e^{-tL_i}(\ind{})(x) \leqslant Ce^{-\delta' t}
\]
for all $i = 1, \dots, d$ and $x \in \R^d$. This in turn means, thanks to \eqref{eq:fact1}, that we have
\begin{equation} \label{eq:ectd}
	e^{-tL}(\ind{})(x) \leqslant e^{-d\delta t}
\end{equation}
for $x \in \R^d$ and $t \geqslant N$, where $N > 0$ and $\delta > 0$ are universal constants.

First we estimate the upper part of the integral in \eqref{eq:RV2}, i.e. the integral from $N$ to $\infty$, dividing the calculations into two cases depending on the value of $a$. If $a < 1$, then
\begin{align*}
	V(x)^a\int_N^\infty &e^{-tL}(\ind{})(x) \, t^{a-1} \, dt \leqslant V(x)^a \, e^{-\frac{N}{2}L}(\ind{})(x) \int_N^\infty e^{-\frac{t}{2}\delta d} \, t^{a-1} \, dt \\
	&\lesssim \frac{N^{a-1}}{\delta d} V(x)^a \, e^{-\frac{N}{2}L}(\ind{})(x) \lesssim \frac{1}{d} \sum_{i=1}^d V_i(x)^a \, e^{-\frac{N}{2}L_i}(\ind{})(x).
\end{align*}
In the last inequality we used the fact that
\[
	\left( x_1 + \dots + x_d \right)^a \leqslant x_1^a + \dots x_d^a
\]
for $a \leqslant 1$ and $x_i \geqslant 0$.

If, on the other hand, $a \geqslant 1$, then
\begin{align*}
	V(x)^a\int_N^\infty &e^{-tL}(\ind{})(x) \, t^{a-1} \, dt \leqslant V(x)^a \, e^{-\frac{N}{2}L}(\ind{})(x) \int_N^\infty e^{-\frac{t}{2}\delta d} \, t^{a-1} \, dt \\
	&\lesssim \frac{1}{(\delta d)^a} V(x)^a \, e^{-\frac{N}{2}L}(\ind{})(x) \lesssim \frac{1}{d} \sum_{i=1}^d V_i(x)^a \, e^{-\frac{N}{2}L_i}(\ind{})(x).
\end{align*}
Here in the last inequality we used that fact that
\[
	\left( x_1 + \dots + x_d \right)^a \leqslant d^{a-1} \left( x_1^a + \dots + x_d^a \right)
\]
for $a \geqslant 1$ and $x_i \geqslant 0$, which follows from Jensen's inequality or H\"{o}lder's inequality.
Thus, we have reduced our problem to the one-dimensional case of estimating $V_i^a e^{-\frac{N}{2}L_i}(\ind{})$ which can be done by invoking \eqref{eq:lemma1}, namely
\begin{equation} \label{eq:xexmax}
	V_i(x)^a \, e^{-\frac{N}{2}L_i}(\ind{})(x) \leqslant V_i(x)^a \, e^{-\frac{N}{2} c_N V_i(x)} \leqslant \left( \frac{2a}{N c_N e} \right)^a
\end{equation}

Then we handle the lower part of the integral in \eqref{eq:RV2}. We estimate $e^{-tL}(\ind{})(x)$ for $t \leqslant N$ independently of $x$ and $d$ by using \eqref{eq:lemma1} and the factorization property \eqref{eq:fact1}, which gives
\[
	e^{-tL}(\ind{})(x) \leqslant e^{-c_NtV(x)},
\]
and then integrate
\[
	V(x)^a \int_0^{N} e^{-tL}(\ind{})(x) \, t^{a-1} \, dt \leqslant V(x)^a \int_0^\infty e^{-c_NtV(x)} \, t^{a-1} \, dt \lesssim c_N^{-a}.
\]
This completes the proof of \thref{thm1'}.

\section{$L^1$ dimension-free estimates}
\label{sec:L1}
In this section we will again use the one-dimensional estimates for the semigroups $e^{-tL_i}$ to prove dimension-free estimates of the $L^1$ norm of $R_V^a$ for $0 < a \leqslant 1$. The idea is to estimate the $L^\infty$ norm of the adjoint operator formally given by
\[
	(L^{-a} V^a) f(x) = \frac{1}{\Gamma(a)} \int_0^\infty e^{-tL}(V^a f)(x) \, t^{a-1} \, dt.
\]
As before, the positivity-preserving property of $e^{-tL}$ lets us reduce the task to estimating the $L^\infty$ norm of
\begin{equation} \label{eq:L1def}
	L^{-a}(V^a)(x) = \frac{1}{\Gamma(a)} \int_0^\infty e^{-tL}(V^a)(x) \, t^{a-1} \, dt.
\end{equation}
However, since $V^a$ may be unbounded, it is not clear if the integral above is a measurable function of $x$. The issue was addressed in detail in \cite[Section 5]{KucharskiWrobelSchrodinger}. Briefly, we define
\begin{equation} \label{eq:L1def2}
	L^{-a}(V^a)(x) \coloneqq \lim_{N \to \infty } \frac{1}{\Gamma(a)} \int_0^\infty e^{-tL}(V^a\ind{\abs{V}<N})(x) \, t^{a-1} \, e^{-t/N} \, dt
\end{equation}
and we see that each integral is finite and measurable by \cite[Lemma 3.1]{KucharskiWrobelSchrodinger}, hence the limit is also measurable. Later it will turn out that the integral in \eqref{eq:L1def} is finite, which lets us handle \eqref{eq:L1def} instead of using \eqref{eq:L1def2}. As in the $L^\infty$ case this lets us reformulate \thref{thm2} in the following way
\begin{theorem} \thlabel{thm2'}
	Fix $0 < \alpha \leqslant 2$ and let $V$ given by \eqref{eq:V} satisfy \eqref{eq:VmM}. For $0 < a \leqslant 1$ let the Riesz transform $R_V^a$ be defined as in \eqref{eq:RV2}. Then there is a constant $C > 0$ depending on $m$, $M$, and $\alpha$ and independent of the dimension $d$ such that
	\[
		\norm{L^{-a} (V^a)}_{\infty} \leqslant C.
	\]
\end{theorem}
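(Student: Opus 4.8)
\section*{Proof proposal for \thref{thm2'}}

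The plan is to reduce the $d$-dimensional estimate to one-dimensional ones, exactly in the spirit of the $L^\infty$ argument, the new ingredient being a bound for $e^{-tL_i}(V_i)$ rather than for $e^{-tL_i}(\ind{})$. So I would first establish the one-dimensional estimate
\[
	e^{-tL_i}(V_i)(x) \leqslant C\left(t^{\alpha/2} + V_i(x)\right)e^{-c\,tV_i(x)}, \qquad 0 \leqslant t \leqslant N,
\]
with $C,c>0$ depending only on $m,M,\alpha,N$. This is proved just like \thref{lemma}: the trivial bound $e^{-tL_i}(V_i)(x)\leqslant \E_{x_i}V_i(X_t)\lesssim t^{\alpha/2}+V_i(x)$ already settles $\abs{x_i}\leqslant 4$, and for $\abs{x_i}>4$ one splits the Feynman--Kac expectation according to whether the Brownian path leaves $[x_i/2,2x_i]$ before time $t$, using $m\abs{y_i}^\alpha\leqslant V_i(y)\leqslant M\abs{y_i}^\alpha$ on the event that it does not (this produces $e^{-tV_i(x)/A}$) and the reflection-principle estimate \eqref{eq:prob} together with the Cauchy--Schwarz inequality on the event that it does; the constraint $\alpha\leqslant 2$ enters, as in \eqref{eq:alpha2}, through $\abs{x_i}^2/t\gtrsim tV_i(x)$. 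By the analogous splitting one checks $\|e^{-s_0L_i}V_i\|_\infty<\infty$ for fixed small $s_0$, and then $e^{-tL_i}(V_i)=e^{-(t-s_0)L_i}(e^{-s_0L_i}V_i)$ combined with \cite[Lemma 4.1]{KucharskiWrobelSchrodinger} gives $e^{-tL_i}(V_i)(x)\lesssim e^{-\delta't}$ for $t\geqslant N$.

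Now concavity of $s\mapsto s^a$ for $0<a\leqslant 1$ comes in: applying Jensen's inequality to the probability measure $e^{-\int_0^tV(X_s)\,ds}\,d\P_x\big/e^{-tL}(\ind{})(x)$ yields
\[
	e^{-tL}(V^a)(x)\leqslant\left(e^{-tL}(V)(x)\right)^a\left(e^{-tL}(\ind{})(x)\right)^{1-a}\leqslant\left(e^{-tL}(V)(x)\right)^a,
\]
so it suffices to bound $\frac1{\Gamma(a)}\int_0^\infty\left(e^{-tL}(V)(x)\right)^a t^{a-1}\,dt$ uniformly in $x$ and $d$, and I split at $t=N$. For $t\geqslant N$ the factorization \eqref{eq:fact1} gives $e^{-tL^i}(\ind{})(x)\leqslant e^{-(d-1)\delta t}$, hence $e^{-tL}(V)(x)=\sum_i e^{-tL^i}(\ind{})(x)\,e^{-tL_i}(V_i)(x)\lesssim d\,e^{-(d-1)\delta t}e^{-\delta't}$, and raising to the power $a$ and integrating against $\Gamma(a)^{-1}t^{a-1}\,dt$ leaves something $\lesssim e^{-(d-1)\delta N}\lesssim 1$, the factor $\Gamma(a)^{-1}$ being harmless.

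The crux is $t\leqslant N$. Here I would combine the one-dimensional bound above with \eqref{eq:lemma1}, \eqref{eq:lemma2} and \eqref{eq:fact1}. Writing $B=\{i:\abs{x_i}\leqslant 4\}$ and $V^i=V-V_i$, the product of \eqref{eq:lemma2} over $i\in B\setminus\{i_0\}$ and \eqref{eq:lemma1} over the rest gives $e^{-tL^{i_0}}(\ind{})(x)\leqslant e^{-c_N(\abs{B}-1)t^{\alpha/2+1}}e^{-c_N tV^{i_0}(x)}$; feeding this and the $e^{-tL_i}(V_i)$ bound into $e^{-tL}(V)=\sum_i e^{-tL^i}(\ind{})e^{-tL_i}(V_i)$ and using $\sum_i V_i(x)=V(x)$ produces
\[
	e^{-tL}(V)(x)\lesssim e^{-c(\abs{B}-1)t^{\alpha/2+1}}\,e^{-c\,tV(x)}\left(V(x)+d\,t^{\alpha/2}\right),\qquad t\leqslant N .
\]
Raising to the power $a$, using $(p+q)^a\approx p^a+q^a$, and integrating against $\Gamma(a)^{-1}t^{a-1}\,dt$ over $(0,N)$ gives two terms. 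The one with $V(x)^a$ is handled by extending to $(0,\infty)$ and recognising a Gamma integral, $\Gamma(a)^{-1}V(x)^a\int_0^\infty e^{-catV(x)}t^{a-1}\,dt=(ca)^{-a}\lesssim 1$. The one with $d^a t^{a\alpha/2}$ is the delicate one; via the substitution $u=t^{\alpha/2+1}$ it is, up to constants, $d^a\int_0^\infty u^{a-1}e^{-c(\abs{B}-1)u}\,du$ if one keeps the $t^{\alpha/2+1}$-exponential, and $d^a\int_0^\infty t^{a(\alpha/2+1)-1}e^{-catV(x)}\,dt$ if one keeps the $tV(x)$-exponential. I then split on $\abs{B}$: if $\abs{B}\geqslant d/2$ the first form yields $d^a(\abs{B}-1)^{-a}\lesssim 1$; if $\abs{B}<d/2$ then at least $d/2$ coordinates have $\abs{x_i}>4$, so $V(x)\gtrsim d$ and the second form yields $d^a(cd)^{-a(\alpha/2+1)}=d^{-a\alpha/2}\leqslant 1$. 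In both cases $d^a$ is absorbed, and using $\Gamma(a)^{-1}$ one checks all the $a$-dependent constants stay bounded for $a\in(0,1]$; thus $\|L^{-a}(V^a)\|_\infty\lesssim 1$.

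The main obstacle is precisely the factor $d^a$ in front of $t^{a\alpha/2}$, which is why both the $d$-dimensional Jensen inequality and the case split are needed: a naive use of subadditivity $V^a\leqslant\sum_i V_i^a$ followed by treating the $d$ summands separately loses a factor $d^{1-a}$ (visible already at $x=(R,\dots,R)$), so one must keep the undecomposed semigroup applied to $V$, estimate it via the refined one-dimensional bounds, and then play the $t^{\alpha/2+1}$-decay coming from coordinates near the origin (abundant when $\abs{B}$ is large) against the $tV(x)$-decay (strong when $\abs{B}$ is small, since then $V(x)$ is large). Producing the one-dimensional estimate for $e^{-tL_i}(V_i)$ with the sharp exponential factor is the other technical point, but it is a routine variant of the proof of \thref{lemma}.
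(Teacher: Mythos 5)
Your argument is correct and shares the paper's overall architecture: factorize the semigroup into one-dimensional pieces, pass from $V^a$ to $V$ via concavity of $s\mapsto s^a$ (the paper phrases this as H\"older, you as Jensen --- same step), split at $t=N$, and absorb the dangerous factor $d^a$ in front of $t^{a\alpha/2}$ by playing the $t^{\alpha/2+1}$-decay from coordinates with $\abs{x_i}\leqslant 4$ against the $tV(x)$-decay from coordinates with $\abs{x_i}>4$. The execution differs in two places, though. For the crucial lower part the paper uses only the weaker one-dimensional input \eqref{eq:etLV}, in which the exponential factor multiplies $V_i(x)^a$ but not the $t^{a\alpha/2}$ term; it then has to recover decay for the term $t^{a\alpha/2}\left(\sum_i e^{-tL^i}(\ind{})\right)^a$ by a three-case analysis in $k$ (the number of large coordinates), interpolating between the two exponential decays via H\"older's inequality with exponents $p=\frac{d-2}{k-1}$, $q=\frac{d-2}{d-k-1}$ and the Gamma-integral identity \eqref{eq:aux}. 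You instead prove the sharper one-dimensional bound $e^{-tL_i}(V_i)(x)\lesssim\left(t^{\alpha/2}+V_i(x)\right)e^{-ctV_i(x)}$ (a genuine but routine strengthening of \eqref{eq:etLV}, proved by the same event-splitting, reflection-principle bound \eqref{eq:prob} and use of $\alpha\leqslant 2$ as in \thref{lemma}), which keeps the global factor $e^{-ctV(x)}$ attached to the $d\,t^{\alpha/2}$ term and lets you get away with the cruder two-case split $\abs{B}\geqslant d/2$ versus $\abs{B}<d/2$, using $V(x)\gtrsim d$ in the latter; this trades the H\"older interpolation for a stronger lemma. For the upper part $t\geqslant N$ the paper is slightly lighter: it uses subadditivity $V^a\leqslant\sum_i V_i^a$, the bound $e^{-tL^i}(\ind{})\leqslant e^{-t\delta(d-1)}$ and mere boundedness of $e^{-NL_i}(V_i^a)$ via \eqref{eq:etLV} and \eqref{eq:lemma1}, whereas you derive $t$-decay of $e^{-tL_i}(V_i)$ through the semigroup property and \cite[Lemma 4.1]{KucharskiWrobelSchrodinger}; both are fine and give constants uniform in $d\geqslant 3$ and $a\in(0,1]$.
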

Before we move to the proof, we need two general results regarding the semigroup $e^{-tL}$. The first one is a factorization property for $e^{-tL}(V)$
\begin{equation} \label{eq:factL1}
	e^{-tL}(V) = \sum_{i=1}^d e^{-tL}(V_i) = \sum_{i=1}^d e^{-tL^i}(\ind{}) \, e^{-tL_i}(V_i), \quad \text{where } L^i = L - L_i.
\end{equation}
The second one is an estimate for $e^{-tL_i}(V_i^a)$
\begin{align} \label{eq:etLV}
	e^{-tL_i}(V_i^a)(x) &= \E_{x_i} \left[ e^{-\int_0^t V_i(X_s) \, ds} \, V_i(X_t)^a \right] \nonumber \\
	&\lesssim \E_0 \left[ e^{-\int_0^t V_i(X_s + x) \, ds} \, V_i(X_t)^a \right] + \E_{0} \left[ e^{-\int_0^t V_i(X_s + x) \, ds} \, V_i(x)^a \right] \nonumber \\
	&\lesssim \E_0 \left[ V_i(X_t)^a \right] + V_i(x)^a \, \E_{x_i} \left[ e^{-\int_0^t V_i(X_s) \, ds} \right] \nonumber \\
	&\lesssim t^{\frac{a\alpha}{2}} + V_i(x)^a \, e^{-tL_i}(\ind{})(x),
\end{align}
valid for $t > 0$ and $x \in \R^d$. Here we used estimate \eqref{eq:VmM} for $V$ and \eqref{eq:abalpha}.
Now we are in position to prove \thref{thm2'}
\begin{proof}[Proof of \thref{thm2'}]
	We begin with the upper part of the integral in \eqref{eq:L1def}, i.e. the integral from $N$ to $\infty$. Using subadditivity of the function $x^a$ for $a \leqslant 1$, factorization \eqref{eq:factL1} and \eqref{eq:ectd} we obtain
	\begin{align*}
		\int_N^\infty e^{-tL}(V^a)(x) \, t^{a-1} \, dt &\leqslant \int_N^\infty \sum_{i=1}^d e^{-tL}(V_i^a)(x) \, t^{a-1} \, dt \\
		&\leqslant \int_N^\infty \sum_{i=1}^d e^{-tL^i}(\ind{})(x) \, e^{-tL_i}(V_i^a)(x) \, t^{a-1} \, dt \\
		&\leqslant \int_N^\infty \sum_{i=1}^d e^{-t \delta (d-1)} e^{-tL_i}(V_i^a)(x) \, t^{a-1} \, dt \\
		&\leqslant N^{a-1} \sum_{i=1}^d \int_N^\infty \norm{e^{-tL_i}(V_i^a)}_\infty e^{-t \delta (d-1)} \, dt \\
		&\lesssim \sum_{i=1}^d \norm{e^{-NL_i}(V_i^a)}_\infty \int_N^\infty e^{-t \delta (d-1)} \, dt \\
		&\lesssim \frac{1}{d-1} \sum_{i=1}^d \norm{e^{-NL_i}(V_i^a)}_\infty.
	\end{align*}
	Then we use \eqref{eq:etLV} and \eqref{eq:lemma1} and we estimate the resulting function similarly to \eqref{eq:xexmax}.
	
	To deal with the lower part we use the inequality
	\[
		e^{-tL}(V^a) \leqslant e^{-tL}(V)^a, \quad a \leqslant 1,
	\]
	which follows from H\"{o}lder's inequality. We use this and \eqref{eq:factL1} to get
	\[
		\int_0^N e^{-tL}(V^a)(x) \, t^{a-1} \, dt \leqslant \int_0^N \left( \sum_{i=1}^d e^{-tL^i}(\ind{})(x) \, e^{-tL_i}(V_i)(x) \right)^a \, t^{a-1} \, dt.
	\]
	Then we use \eqref{eq:etLV} and obtain
	\begin{align*}
		\int_0^N e^{-tL}(V^a)(x)& \, t^{a-1} \, dt \lesssim \int_0^N \left( \sum_{i=1}^d e^{-tL^i}(\ind{})(x) \, \left( t^{\frac{\alpha}{2}} + V_i(x) \, e^{-tL_i}(\ind{})(x) \right) \right)^a \, t^{a-1} \, dt \\
		&= \int_0^N \left( V(x) e^{-tL}(\ind{})(x) + t^{\frac{\alpha}{2}} \sum_{i=1}^d e^{-tL^i}(\ind{})(x) \right)^a \, t^{a-1} \, dt \\
		&\leqslant \int_0^N V(x)^a e^{-tL}(\ind{})(x)^a \, t^{a-1} \, dt + \int_0^N t^{\frac{a\alpha}{2}} \left( \sum_{i=1}^d e^{-tL^i}(\ind{})(x) \right)^a \, t^{a-1} \, dt.
	\end{align*}
	To the first integral we apply \eqref{eq:lemma1} and factorization \eqref{eq:fact1}, which lets us estimate the first integral by a constant independent of $x$ and the dimension $d$. To estimate the second integral we fix $x = (x_1, \dots, x_d)$ and divide its coordinates $x_j$ into those whose absolute value is greater than 4 and all others. Say that there are $k$ coordinates greater than 4 and $d-k$ not greater than 4. Then we consider three cases.
	
	First we assume that $k = 0$ and apply \eqref{eq:lemma2} and \eqref{eq:fact1} to get
	\[
		\int_0^N t^{\frac{a\alpha}{2}} \left( \sum_{i=1}^d e^{-tL^i}(\ind{})(x) \right)^a \, t^{a-1} \, dt \leqslant \int_0^N d^a e^{-a(d-1)c_Nt^{\frac{\alpha}{2}+1}} \, t^{\frac{a\alpha}{2}} \, t^{a-1} \, dt \lesssim \frac{d^a}{d^a} = 1.
	\]
	In the last inequality we used
	\begin{equation} \label{eq:aux}
		\int_0^\infty e^{-At^\beta} \, t^\gamma \, dt = \frac{\Gamma \left( \frac{\gamma+1}{\beta} \right)}{\beta A^{\frac{\gamma+1}{\beta}}},
	\end{equation}
	with $A = a(d-1)c_N$, $\beta = \frac{\alpha}{2} + 1$ and $\gamma = \frac{a\alpha}{2} + a - 1$.
	
	Then if $k = d$, we apply \eqref{eq:lemma1} and \eqref{eq:fact1} and use the fact $V_i(x) \geqslant m \cdot 4^\alpha$ which gives
	\begin{align*}
		\int_0^N t^{\frac{a\alpha}{2}} \left( \sum_{i=1}^d e^{-tL^i}(\ind{})(x) \right)^a \, t^{a-1} \, dt &\leqslant \int_0^N d^a e^{-4^\alpha m a c_N t(d-1)} \, t^{\frac{a\alpha}{2}} \, t^{a-1} \, dt \\
		&\lesssim \int_0^N d^a e^{-4^\alpha m a c_N td} \, t^{a-1} \, dt \lesssim 1.
	\end{align*}
	The third case is when $0 < k < d$ in which the estimate is a mixture of the estimates for $k=0$ and $k=d$. Observe that each $(d-1)$-element subsequence of $(x_1, \dots, x_d)$ has at least $k-1$ elements greater than 4 and at least $d-k-1$ elements not greater than 4. By \eqref{eq:lemma1} and \eqref{eq:lemma2} this means that
	\[
		\int_0^N t^{\frac{a\alpha}{2}} \left( \sum_{i=1}^d e^{-tL^i}(\ind{})(x) \right)^a \, t^{a-1} \, dt \leqslant \int_0^N d^a e^{-4^\alpha m a c_N t(k-1)} \, e^{-a c_N (d-k-1)t^{\frac{\alpha}{2}+1}} t^{\frac{a\alpha}{2}} \, t^{a-1} \, dt.
	\]
	Then we use H\"{o}lder's inequality with $p = \frac{d-2}{k-1}$ ($p = \infty$ if $k=1$) and $q = \frac{d-2}{d-k-1}$ ($q = \infty$ if $k=d-1$) to the functions $e^{-4^\alpha m a c_N t(k-1)}$ and $e^{-a c_N (d-k-1)t^{\frac{\alpha}{2}+1}} t^{\frac{a\alpha}{2}}$ with respect to the measure $t^{a-1} \, dt$ which yields
	\begin{align*}
		d^a \int_0^N & e^{-4^\alpha m a c_N t(k-1)} \cdot \, e^{-a c_N (d-k-1)t^{\frac{\alpha}{2}+1}} t^{\frac{a\alpha}{2}} \cdot \, t^{a-1} \, dt \\
		&\lesssim d^a \left(\int_0^N e^{-4^\alpha m a c_N t(d-2)} \, t^{a-1} \, dt \right)^{1/p} \left(\int_0^N e^{-a c_N (d-2)t^{\frac{\alpha}{2}+1}} t^{\frac{a\alpha}{2}} \, t^{a-1} \, dt \right)^{1/q} \\
		&\lesssim d^a \left( \frac{1}{d^a} \right)^{1/p} \left( \frac{1}{d^a} \right)^{1/q} = 1.
	\end{align*}
	Again, in the last inequality we used \eqref{eq:aux}.
\end{proof}

\bibliographystyle{plain}
\bibliography{bib}

\end{document}